\numberwithin{equation}{section}
\newtheorem{Theorem}{Theorem}[section]
\newtheorem{thm}[Theorem]{Theorem}
\newtheorem{lemma}[Theorem]{Lemma}
\newtheorem{definition}[Theorem]{Definition}
\newtheorem{remark}[Theorem]{Remark}
\newtheorem{example}[Theorem]{Example}
\newtheorem*{namedtheorem}{\theoremname}
\newtheorem{theoremL}{Theorem}
\newcommand{\theoremname}{testing}
\newcommand{\R}{{\mathbb R}}
\newcommand{\Z}{{\mathbb Z}}
\newcommand{\T}{{\mathbb T}}
\newcommand{\Mo}{\mathcal{M}_\mathbb{T}}
\newcommand{\Mt}{\widetilde{\mathcal{M}_\mathbb{T}}}
\newcommand{\Dtt}{\widetilde{\mathcal{D}_\mathbb{T}}}
\newcommand{\Dt}{\mathcal{D}_\mathbb{T}}
\DeclareMathOperator{\agl}{AGL}
\DeclareMathOperator{\vol}{vol}
\DeclareMathOperator{\AGL}{AGL}
\DeclareMathOperator{\ConvHull}{ConvHull}
\DeclareMathOperator{\length}{length}
\title[On the density function on moduli 
spaces of toric $4$\--manifolds]{On the density function\\ on moduli 
spaces of toric $4$\--manifolds}
\author{Alessio Figalli \,\,\,\,\,\,\,\,\,\,\,\,\,\,\,\,\, \'Alvaro Pelayo}
\date{}
\begin{document}
\maketitle

\date{}
\begin{abstract}
The optimal density function assigns to each symplectic toric manifold $M$ a number $0<d\leq 1$ obtained
by considering the ratio between the maximum volume of $M$ which can be filled by symplectically embedded 
disjoint balls and the total symplectic volume of $M$. In the toric version of this problem, $M$ is toric and the balls
need to be embedded respecting the toric action on $M$. The goal of this note is first to give a brief survey of the notion of toric 
symplectic manifold and the recent constructions of moduli space structure on them, and recall how to define a natural density function on this moduli space. 
Then we review previous works which explain how the study of the density function can be reduced to a problem in convex geometry, and use this correspondence to 
to  give a simple description of the regions of continuity of the maximal density function when the dimension is $4$.
\end{abstract}

\section{Introduction}

In symplectic topology the ball packing problem asks how much of the volume of a symplectic
manifold $(M,\omega)$ can be approximated by symplectically embedded disjoint balls, see Figure~\ref{embedding}.  
This is in general a very difficult problem; a lot of progress on it and directly related problems
has been made by a number of authors, among them Biran~\cite{B0,B1/2,B3}, Borman\--Li\--Wu~\cite{BoLiWu2013},
McDuff--Polterovich~\cite{MP}, Schlenk~\cite{Schlenk2005}, Traynor~\cite{T}, and Xu~\cite{Xu}. 
The \emph{optimal density function} $\Omega$ assigns to each closed symplectic manifold $M$ the number $0<d\leq 1$ obtained
by considering the ratio between the largest volume $v$ of $M$which can be filled by 
equivariantly and symplectically embedded disjoint balls, and the total symplectic volume ${\rm vol}(M)$ of $M$.
An  \emph{optimal} packing
is one for which the sum of these disjoint volumes divided by ${\rm vol}(M)$ is as large as it can be,
taking into consideration all possible such packings.

The article \cite{Pe2} discusses 
a particular instance of the symplectic ball packing problem: 
\emph{the  toric case}. In the toric case both the symplectic manifold $(M,\omega)$ and the standard
open symplectic ball $\mathbb{B}_r$ in $\mathbb{C}^n$ are equipped with a Hamiltonian action of
an $n$\--dimensional torus $\mathbb{T}$, and the symplectic embeddings 
of the ball into the manifold $M$ are equivariant with respect to these
actions.   In the case of symplectic toric manifolds,
there always exists at least an optimal packing \cite{PeSc2008}.  The best way to think of this
problem is an approximation theorem for our integrable system  
by disjoint integrable systems on balls. This problem is more rigid, because  for instance fixed
points of the system have to coincide with the origin of the ball, so the symplectic balls
mapped this way have less flexibility.

\begin{figure}[htb]
\begin{center}
\includegraphics{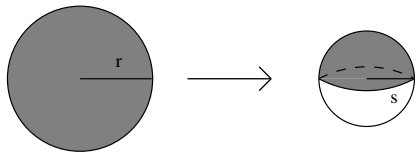}
\caption{A toric symplectic embedding of the $2$\--ball of radius $r$
into a $2$\--sphere of radius $s$ with $r/s=\sqrt{2}$.}\label{embedding}
\end{center}
\end{figure}

In this note we concentrate on symplectic toric manifolds (also called toric integrable systems)
because for this we have a good understanding \cite{Pe1,Pe2,PeSc2008}, but the question
is interesting for any integrable system. These particular systems are usually called \emph{toric}, or
\emph{symplectic toric}, and a rich structure theory due to Kostant, Guillemin\--Sternberg, Delzant among
others led to a complete classification in the 1980s  \cite{At1982, De1988, GuSt1982, Gu1994}. We refer
to Section~\ref{sec:st} for the precise definition of symplectic toric manifolds, and to 
Section~\ref{sec:moduli} for the definition of  the moduli
space $\mathcal{M}_{\T}$ of such manifolds, where $\T$ denotes the standard torus of dimension
precisely half the dimension of the manifolds in $\mathcal{M}_{\T}$. 
By using elementary
geometric arguments, we shall describe the regions of this moduli space where the optimal density
function $\Omega \colon \mathcal{M}_{\T} \to [0,1]$ is continuous. In particular, as a consequence of this description one can observe that
the density function is highly discontinuous.

The structure of the paper is as follows.  In Section \ref{sec:0} we state the main theorem of the paper: Theorem~\ref{symplectic-geometry}; in Section \ref{sec:st} we review the notion of symplectic toric manifold;
in Section \ref{sec:moduli} we review the construction of the moduli space $\Mo$ of symplectic toric manifolds;
in Section \ref{sec:density} we review the definition of the density function on $\Mo$;
in Section \ref{sec:convex} we explain how to reduce the proof of Theorem \ref{symplectic-geometry}
to the proof of a theorem in convex geometry (Theorem \ref{convex-geometry});
in Section \ref{sec:proof} we state and prove Theorem \ref{convex-geometry}.
\textup{\,}
\\
\\
\emph{Acknowledgements}. This work was supported by NSF and a J Tinsley Oden Fellowship at ICES, UT Austin. AP  thanks Luis Caffarelli and Alessio Figalli for sponsoring his visit.

\section{Main theorem} \label{sec:0}

 Let $\T$ be a $2$\--dimensional torus, 
let $\Mo$ be the moduli space of symplectic toric $4$\--manifolds. 
It is known \cite{PePiRaSa} that  $\mathcal{M}_{\T}$ is a neither a locally compact
nor a complete metric space but its completion is well understood and describable in
explicit terms (see Theorem~\ref{pp}). Let $\Omega \colon \Mo \to [0,1]$ be the 
\emph{optimal density function}, which assigns to a manifold the density of its optimal toric ball packing, cf.
Figure~\ref{optimalpacking}.  
By construction $\Omega$ \emph{is an invariant} of the symplectic toric type of $M$.
 It is natural to wonder what the precise regions of continuity of $\Omega$ are; this problem
 was posed in \cite[Problem 30]{PePiRaSa}. The following result solves this problem by
 giving a characterization of the regions where $\Omega$ is continous.

\begin{theoremL} \label{symplectic-geometry}
Let $N \geq 1$ be an integer and let $\Mo^N$ be the set of symplectic toric manifolds with precisely $N$ points fixed
by the $\T$\--action. Then:
\begin{itemize}
\item[{\rm (1)}]
$\Omega$ is discontinuous at every $(M,\omega,\T) \in \Mo$, and the restriction 
$\Omega|_{\Mo^N}$ is continuous for each $N \geq 1$.
\item[{\rm (2)}]
Given $(M,\omega,\T) \in \Mo^N$, define $\Omega_i (M,\omega,\T)$, where $1 \leq i \leq N$, to be the
optimal density computed along all packings avoiding balls with center at the $i$th fixed point of 
the $\T$\--action. Then $\Mo^N$ is the largest neighborhood of $M$
in $\Mo$ where  $\Omega$ is continuous if and only if $\Omega_i(M,\omega,\T)<\Omega(M,\omega,\T)$ for all
$i$ with $1\leq i \leq N$.
\end{itemize}
\end{theoremL}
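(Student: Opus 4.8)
The plan is to carry out the whole argument on the convex-geometric side supplied by Theorem~\ref{convex-geometry}. Under that correspondence a point of $\Mo^N$ is a Delzant polygon $\Delta$ with exactly $N$ vertices, the $\T$\--fixed points are the vertices, and an equivariant toric packing is recorded by one corner simplex per vertex; the simplex at the $i$\--th vertex is determined by a single size $c_i\ge 0$, the capacity of the corresponding ball, and contributes area $\tfrac12 c_i^{2}$ in the integral\--affine structure. By convexity and the Delzant condition, two such simplices are disjoint and contained in $\Delta$ if and only if $c_i+c_j\le\ell_{ij}$ for every pair of adjacent vertices, where $\ell_{ij}$ is the lattice length of the common edge, and no inequality between non\--adjacent vertices is ever active. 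The first, routine, step is therefore to justify this dictionary and record the formula
\[
\Omega(\Delta)=\frac{1}{\vol(\Delta)}\,\max\Bigl\{\tfrac12\sum_{i=1}^{N}c_i^{2}\ :\ c_i\ge 0,\ c_i+c_j\le\ell_{ij}\ \text{for all adjacent } i,j\Bigr\},
\]
to observe that $\Omega_i(\Delta)$ is the same maximum with the extra constraint $c_i=0$, and that consequently $\Omega_i(\Delta)\le\Omega(\Delta)$ for every $i$, while $\Omega(\Delta)>0$ since any polygon admits at least one small corner simplex.

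Next I would settle the second half of part~(1), the continuity of $\Omega|_{\Mo^N}$. On a combinatorial stratum the primitive edge directions are locally constant, so as $\Delta$ varies only the lattice lengths $\ell_{ij}$ and the area $\vol(\Delta)$ change, and both do so continuously. The feasible region $\{c\ge 0:\ c_i+c_j\le\ell_{ij}\}$ is then a nonempty compact convex polytope depending continuously, in the Hausdorff sense, on the data, while the objective $\tfrac12\sum c_i^{2}$ is continuous; Berge's maximum theorem then gives continuity of the optimal value, hence of $\Omega$, on $\Mo^N$.

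The core of the argument is the behaviour of $\Omega$ transverse to the strata, governed by the corner chop (toric blow\--up). Chopping the $i$\--th vertex to depth $\epsilon$ replaces it by two vertices joined by a new edge of lattice length $\epsilon$ and produces $\Delta_{\epsilon}\to\Delta$; more generally one may chop any subset $S$ of the vertices simultaneously. The key observation is that every vertex created on a chopped corner is incident to the new length\--$\epsilon$ edge, so the two capacities there sum to at most $\epsilon$, whence each such capacity is $\le\epsilon$. Letting $\epsilon\to 0$ forces all balls on the chopped corners to disappear, and a uniform version of the maximum\--theorem argument identifies the limit: chopping the set $S$ gives $\Omega\to\Omega_S(\Delta)$, the density computed with $c_i=0$ for all $i\in S$. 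Two consequences follow at once. First, chopping \emph{all} $N$ vertices yields $\Delta_\epsilon\in\Mo^{2N}$ with $\Delta_\epsilon\to\Delta$ and every capacity $\le\epsilon$, so $\Omega(\Delta_\epsilon)\le N\epsilon^{2}/\vol(\Delta_\epsilon)\to 0<\Omega(\Delta)$; since this works at every $\Delta$, it proves the first half of part~(1), that $\Omega$ is discontinuous at every point. Second, chopping the single vertex $i$ gives $\Omega\to\Omega_i(\Delta)$, so that sequence already detects a discontinuity precisely when $\Omega_i(\Delta)<\Omega(\Delta)$, i.e. when the $i$\--th vertex is essential for optimal packing.

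To obtain the sharp statement of part~(2) I would show that the chops exhaust the ways of leaving $\Mo^N$ near $\Delta$: after passing to a subsequence, any sequence in $\Mo$ converging to $\Delta$ is an interior deformation within $\Mo^N$ (along which $\Omega\to\Omega(\Delta)$) composed with corner chops, of sizes tending to $0$, at a fixed subset $S$ of vertices, and its limiting value is the corresponding $\Omega_S(\Delta)$. Since $\Omega_S(\Delta)\le\Omega_i(\Delta)$ for any $i\in S$, $\Mo^N$ can be enlarged to a strictly larger (stratified) neighbourhood of $\Delta$ on which $\Omega$ stays continuous if and only if some single\--vertex chop stratum can be adjoined, which by the chop limit happens exactly when $\Omega_i(\Delta)=\Omega(\Delta)$ for some $i$; equivalently, $\Mo^N$ is the largest neighbourhood of $\Delta$ on which $\Omega$ is continuous precisely when $\Omega_i(\Delta)<\Omega(\Delta)$ for every $i$, which is part~(2). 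The main obstacle is this last local\--structure statement: one must rule out that a nearby Delzant polygon acquires edges with \emph{new} primitive normals of non\--negligible length not coming from corner chops, and show that, at bounded lattice size, the only degenerations approaching $\Delta$ are corner chops and their iterates together with motions of the existing edges. Controlling this local picture of $\Mo$ around $\Delta$, and making precise the relevant notion of stratified neighbourhood, is where the convex and lattice geometry behind Theorem~\ref{convex-geometry} and the description of the completion in Theorem~\ref{pp} must enter in an essential way; a secondary technical point is to make the limit $\Omega\to\Omega_S(\Delta)$ uniform when several chops shrink while the surviving edges also move.
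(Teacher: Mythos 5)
Your proposal follows essentially the same route as the paper: reduce to Delzant polygons via Theorem~\ref{2:t}, prove discontinuity by chopping every corner (so that each admissible simplex acquires a side of length $\le\epsilon$ and the packed volume vanishes), prove continuity on each stratum $\mathcal{P}^N$ from the rigidity of smooth corners (nearby $N$-gons differ only by parallel translation of edges), and settle part (2) via single-corner chops whose densities tend to $\Omega_i(\Delta)$. The only presentational differences are that you make the one-capacity-per-vertex quadratic program explicit and invoke Berge's maximum theorem where the paper cites \cite{PeSc2008} for continuity under parallel translation, and that you explicitly flag as the main remaining point the local description of neighbourhoods of $\Delta$ in $\Dt$ (parallel translations plus small corner chops), which the paper also uses but dismisses as a ``simple observation.''
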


Note that $$\Mo=\bigcup_{N\geq 1} \Mo^N.$$

The aforementioned classification of symplectic toric manifolds is 
 in terms of a class of convex polytopes in
$\mathbb{R}^n$, called now \emph{Delzant polytopes}.  In practice this means that the a priori
very complicated packing problem for these systems can be formulated and studied in terms of convex
geometry, which is what was done in  \cite{Pe1,Pe2,PeSc2008}. 
This is because symplectic toric manifolds have an associated momentum map $\mu \colon M \to \mathbb{R}^n,$
which is a Morse\--Bott function with a number of remarkable properties and which captures all the main features
of the symplectic geometry of $(M,\omega)$ and of the Hamiltonian 
action of $\T$ on $M$ (the proof of this is due
to Atiyah, Guillemin, Sternberg, and Delzant). In fact $\mu(M)$ is a convex polytope in $\mathbb{R}^n$.

\begin{figure} \label{optimalpacking}
\begin{center}
\epsfig{file=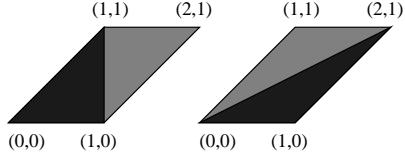}
\caption{Two distinct optimal packings of a symplectic toric  manifold $M$ isomorphic to $\mathbb{CP}^1 \times \mathbb{CP}^1$, so $\Omega(M)=1$.}
\end{center}
\end{figure}

{The strategy of this paper is to prove a slightly more general result about polytopes,} Theorem \ref{convex-geometry},
which implies Theorem~\ref{symplectic-geometry} by virtue of a theorem proven in \cite{Pe2} (reviewed in the present paper, see Theorem~\ref{2:t}).

\begin{remark}
\normalfont
The proof of 
Theorem~\ref{convex-geometry} is based on simple geometric arguments that can be easily understood in the case $n=2$
(i.e., when dealing with Delzant polygons) but the general approach in the proof should work  in any dimension
(indeed, in the proof we rely only on results from \cite{PeSc2008} that hold in any dimension). 
However the case of symplectic toric $4$\--manifolds
is currently the most interesting as the topology of the moduli space is understood only in that case
(see Theorem \ref{pp}).
\end{remark}

\section{Symplectic toric manifolds} \label{sec:st}
In this section we review the notion of symplectic toric manifold in arbitrary dimension. 

Let $(M, \,\omega)$ be a closed symplectic $2n$\--dimensional manifold.  Let $\mathbb{T}^k\cong (S^1)^k$ be the $k$\--dimensional torus,
and write $\mathbb{T}:=\mathbb{T}^n$ to denote 
the standard $n$\--dimensional torus.

Let $\mathfrak{t}$ be the Lie algebra $\mbox{Lie}(\T)$ 
of $\T$ and let $\mathfrak{t}^*$ be the dual of $\mathfrak{t}$.  
A symplectic action $\psi: \mathbb{T}^k \times M \rightarrow M$ of a 
$k$\--dimensional torus (that is, an action preserving the form $\omega)$ 
is called \emph{Hamiltonian}  if there is a map $$\mu: M \rightarrow \frak{t}^*,$$
known as a \emph{momentum map}, satisfying Hamilton's equation 
$$\textup{i}_{\xi_M} \omega=\textup{d} \langle \mu,\, \xi \rangle,$$ for all $\xi \in \frak{t}$. The momentum map is defined up to translation by an element of $\frak{t}^*$.  Nevertheless, we ignore this 
ambiguity and call it \emph{the momentum map}.   

\begin{definition}
A \emph{symplectic\--toric manifold} is a quadruple $(M,\,\omega, \T,\mu)$ where $M$ is  a $2n$\--dimensional 
closed symplectic manifold $(M,\, \omega)$ equipped with an effective Hamiltonian action 
 of an $n$\--dimensional torus $\T$ 
with momentum map $\mu$. 
\end{definition}

\begin{example}  
\normalfont
The projective space $(\mathbb{CP}^n,\, \lambda \cdot \omega_{\textup{FS}})$, where  
$\omega_{\textup{FS}}$ is the Fubini\--Study form given by
$$
\omega_{\textup{FS}}=\frac{1}{2 (\sum_{i=0}^n \bar{z}_i z_i)} \sum_{k=0}^n \sum_{j \neq k} (\bar z_j z_j \, \textup{d}z_k \wedge \textup{d} \bar{z}_k-
\bar z_j z_k  \, \textup{d}z_j \wedge \textup{d} \bar{z}_k)
$$
equipped with the rotational action of $\mathbb{T}^n$, 
$$
(\textup{e}^{\textup{i} \theta_1},\ldots,\textup{e}^{\textup{i} \theta_n}) \cdot [z_0: \ldots:z_n]=[z_0:\, \textup{e}^{-2 \pi \textup{i} \theta_1}\, z_1: \ldots:\textup{e}^{-2 \pi \textup{i} \theta_n} \, z_n],
$$
is a $2n$\--dimensional symplectic\--toric manifold. The components of the momentum map are
$$\mu^{\mathbb{CP}^n}_k(z)=\frac{\lambda |z_k|}{\sum_{i=0}^n|z_i|^2}.$$
The corresponding momentum polytope is equal to  the convex hull in $\mathbb{R}^n$ of $0$ and 
the scaled canonical vectors $\lambda e_1,\ldots,\lambda e_n$.
\end{example}

Strictly speaking,  $\mu$ is a map from $M$ to 
$\mathfrak{t}^*$.  However, the presentation is simpler if 
from the beginning we identify both $\mathfrak{t}$ and $\mathfrak{t}^*$ with $\mathbb R^n$
and consider $\mu$ as a map 
from $M$ to $\mathbb{R}^n$. The procedure to do this, that we now describe, is standard
but not canonical. Choose an epimorphism 
$E \colon \mathbb{R} \to \mathbb{T}^1$, for instance, 
$x \mapsto e^{2\pi\, \sqrt{-1}x}$. This Lie group epimorphism 
has discrete center $\mathbb{Z}$ and the inverse of the  
corresponding Lie algebra isomorphism is given by
$\mbox{Lie}(\mathbb{T}^1) \ni \frac{\partial}{\partial x} 
\mapsto \frac{1}{2\pi}\in \mathbb{R}$. Thus, for
$\mathbb{T}$ we get the 
non-canonical isomorphism between the corresponding
commutative Lie algebras
$$
\mbox{Lie}(\mathbb{T}) =
\mathfrak{t} \ni \frac{\partial}{\partial x_k} \longmapsto 
\frac{1}{2\pi} \, {\rm e}_k \in\mathbb{R}^n,
$$ 
where ${\rm e}_k$ is the $k^{\rm th}$ element in the canonical basis
of $\mathbb{R}^n$. Choosing an inner product 
$\langle \cdot,\cdot \rangle$ on $\mathfrak{t}$, we obtain an 
isomorphism $\mathfrak{t} \to \mathfrak{t}^*$, and hence 
taking its inverse and composing it with the isomorphism
$\mathfrak{t} \to \mathbb{R}^n$ described above, we get an 
isomorphism $\mathcal{I}: \mathfrak{t}^* \to \mathbb{R}^n$. 
In this way, we obtain a momentum map $\mu=\mu_{\mathcal{I}} \colon M \to \mathbb{R}^n$.

\begin{example}
\normalfont
Consider the open ball $\mathbb{B}_{r}$ if radius $r$ in $\subset \mathbb{C}^n$, equipped 
with the standard symplectic form $\omega_0=\frac{i}{2}\, \sum_{j} \textup{d}z_j \wedge 
\textup{d}\overline{z_j}$ and the Hamiltonian action ${\rm Rot}$ by rotations given by 
$(\theta_1,\ldots, \theta_n)\cdot(z_1,\ldots, z_n)=
(\theta_1 \, z_1, \ldots ,\theta_n \, z_n)
$.  In this case the components of the  momentum map $\mu^{\mathbb{B}_{r}}$ are 
$\mu^{\mathbb{B}_{r}}_k=|z_k|^{2}$.  The image $\Delta^n(r)$ of the momentum map is 
\begin{equation} \label{stsimplex}
\Delta^n(r)=\ConvHull(0,r^{2}\, e_1,\ldots, r^{2}\, e_n)\setminus \ConvHull(r^{2}\, e_1,\ldots, r^{2}\, e_n),
\end{equation}
where $\{e_i\}_{i=1}^n$ stands for the canonical basis of $\mathbb{R}^n$. 
\end{example}

 The image of the momentum map of a  symplectic\--toric manifold is  a particular class of convex polytope, a so called \emph{Delzant polytope}, see
 Figure~\ref{xxx}.

\begin{figure}[htb]
\begin{center}
\epsfig{file=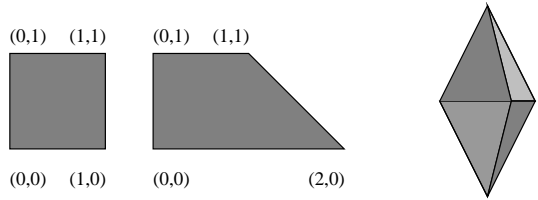}
\label{xxx}
\caption{Two Delzant polytopes (left) and a non-Delzant polytope (right).}
\end{center}
\end{figure}

\begin{definition}\label{del2} {\rm (\cite{anacannas})}
A convex polytope $\Delta$ in $\R^n$ is a \emph{Delzant polytope} if it is simple, rational and smooth:
\begin{itemize}
\item[{\rm (i)}] $\Delta$ is \emph{simple} if there are exactly $n$ edges meeting at each vertex $v\in V$;
\item[{\rm (ii)}] $\Delta$ is \emph{rational} if for every vertex $v\in V$, the edges meeting at $v$ are of the form $v+t u_i$, $t\geq 0$, and $u_i\in \Z^n$;
\item[{\rm (iii)}] A vertex $v\in V$ is \emph{smooth} if the edges meeting at $v$ are of the form $v+tu_i$, $t\geq 0$, where the vectors
$u_1,\ldots,u_n$ can be chosen to be a $\Z$ basis of $\Z^n$. $\Delta$ is \emph{smooth} if every vertex $v\in V$ is smooth.
\end{itemize}
\end{definition}

\section{Moduli spaces of  toric manifolds} \label{sec:moduli}

With the conventions above, where 
$\T$ and the identification $\mathcal{I} \colon
\mathfrak{t}^* \to \mathbb{R}^n$ are \emph{fixed}, we next 
define the moduli space of toric manifolds.   

\subsection{The moduli relation}

Let $$(M,\omega,\T,\mu \colon M \to \mathbb{R}^n)$$ and 
$$(M',\omega',\T,\mu' \colon M \to \mathbb{R}^n)$$ be symplectic 
toric manifolds. 
These two
symplectic toric manifolds are \emph{isomorphic} if 
there exists an equivariant symplectomorphism  
$\varphi\colon M\to M'$ (i.e., $\varphi$ is a diffeomorphism 
satisfying $\varphi^\ast \omega' = \omega$ which intertwines the $\T$ actions) such that 
$\mu'\circ \varphi=\mu$ (see also \cite[Definition I.1.16]{AuCaLe}). 
We denote by $$\Mo:=\Mo^{\mathcal{I}}$$ 
the \emph{moduli space} (that is, the set of equivalence classes) of 
symplectic toric manifolds of a fixed dimension $2n$ under this 
equivalence relation. 

\begin{thm}{\rm (Delzant \cite[Theorem 2.1]{De1988})}\label{Del1}
Let $(M,\omega,\T,\mu)$ and $(M',\omega',\T,\mu' )$ be symplectic toric 
manifolds. If the images $\mu(M)$ and $\mu'(M')$ are equal, then $(M,\omega,\T,\mu)$ and 
$(M',\omega',\T,\mu' )$ are isomorphic.
\end{thm}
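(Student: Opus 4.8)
The plan is to prove the theorem by reducing both manifolds to a single canonical model attached to the polytope $\Delta := \mu(M) = \mu'(M')$, and then composing. Concretely, I would first invoke the Delzant construction: writing $\Delta = \{x \in \R^n : \langle x, \nu_i\rangle \leq \lambda_i,\ i = 1,\dots,d\}$ with the $\nu_i \in \Z^n$ the primitive inward normals to the $d$ facets, the smoothness and rationality hypotheses guarantee that the linear map $\pi \colon \R^d \to \R^n$, $e_i \mapsto \nu_i$, descends to a torus epimorphism $\mathbb{T}^d \to \T$ whose kernel $N$ is a subtorus. Symplectic reduction of $(\mathbb{C}^d, \omega_0)$ with its standard Hamiltonian $\mathbb{T}^d$-action by $N$ at the level prescribed by $(\lambda_i)$ then yields a symplectic toric manifold $X_\Delta$ with residual $\T = \mathbb{T}^d/N$ action and momentum image exactly $\Delta$. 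It suffices to show that \emph{any} symplectic toric manifold with momentum polytope $\Delta$ is isomorphic to $X_\Delta$, since then $M \cong X_\Delta \cong M'$ through isomorphisms intertwining the momentum maps, and composition gives the claim.

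Next I would build the isomorphism $\varphi\colon M \to X_\Delta$ stratum by stratum, organized by the face lattice of $\Delta$. Over the interior $\Delta^\circ$ the $\T$-action on $\mu^{-1}(\Delta^\circ)$ is free, and the Arnold--Liouville (action--angle) theorem identifies it equivariantly and symplectically with $\Delta^\circ \times \T$ carrying the canonical form $\sum_i \mathrm{d}x_i \wedge \mathrm{d}\theta_i$; since $\Delta^\circ$ is convex, hence contractible, the principal bundle is trivial and this normal form is canonical up to a constant translation in $\T$ and the residual gauge of the angle coordinates. The same holds for $X_\Delta$, so over the interior $M$ and $X_\Delta$ are identified once the constant translation is fixed. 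Over each lower-dimensional face I would instead invoke the equivariant slice / symplectic normal-form theorem for Hamiltonian torus actions: along the preimage of a codimension-$k$ face the action has a fixed isotropy subtorus whose weights are read off from the $k$ facet normals $\nu_i$ meeting there, and the Delzant condition forces these weights to extend to a $\Z$-basis of $\Z^n$, so the local model is the standard $\mathbb{C}^k \times (\text{lower-dimensional toric piece})$. Because the $\nu_i$ are the same data for $M$ and for $X_\Delta$, the local models coincide, producing local equivariant symplectomorphisms compatible with $\mu$.

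Finally I would glue these local models to the global interior identification by an equivariant Moser argument. The two symplectic forms in play — the one transported from $M$ and the model form on $X_\Delta$ — agree on each stratum and are cohomologous in the equivariant sense, so interpolating along $\omega_t$ and solving $\mathcal{L}_{V_t}\omega_t = -\dot\omega_t$ with a $\T$-invariant, $\mu$-fiber-preserving primitive yields an isotopy that simultaneously repairs the mismatch and absorbs the gauge freedom left over from the interior step.

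The step I expect to be the genuine obstacle is precisely this gluing across the singular strata: reconciling the action--angle trivialization on the open dense orbit with the local normal forms near the boundary faces, while checking that the interpolating vector field is complete and respects equivariance and the momentum constraint at once. This is where the Delzant hypotheses are indispensable — simplicity, rationality, and especially smoothness are exactly what force the boundary normal forms to be genuine smooth models matching the combinatorics of $\Delta$; without smoothness one would land in the orbifold category and the gluing would fail.
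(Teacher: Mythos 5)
The paper offers no proof of this statement at all: it is quoted verbatim as Delzant's uniqueness theorem, with a citation to \cite{De1988}, so there is nothing internal to compare your argument against. Judged on its own terms, your outline is the standard strategy (reduce to the canonical model $X_\Delta$ obtained by symplectic reduction of $\mathbb{C}^d$, then show any toric manifold with polytope $\Delta$ is isomorphic to it), and the first two steps --- the Delzant construction and the local normal forms over the faces, with the smoothness condition guaranteeing that the facet normals at a vertex form a $\Z$-basis --- are correct and are exactly where the Delzant hypotheses enter.

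The genuine gap is the final gluing paragraph, and it is larger than you suggest. Before you can run a Moser argument you need a single $\T$-equivariant diffeomorphism $M \to X_\Delta$ covering the identity on $\Delta$ along which to compare the two forms; your text compares ``the form transported from $M$'' with the model form, which presupposes that diffeomorphism and is therefore circular at the crucial point. Moreover the interior action--angle chart is canonical only up to a choice of Lagrangian section, and there is no reason a priori that the section you pick over $\Delta^\circ$ extends compatibly with the local models near the boundary; asserting that a $\T$-invariant, $\mu$-fiber-preserving primitive of $\dot\omega_t$ exists and generates a complete isotopy is precisely the content you would need to prove. Delzant's actual resolution avoids a single global interior chart altogether: one covers $\Delta$ by open sets over which local uniqueness holds (by the equivariant normal form), and the obstruction to patching the local equivariant symplectomorphisms into a global one lies in the \v{C}ech cohomology of $\Delta$ with values in the sheaf of local automorphisms (essentially locally constant $\T$-valued maps, equivalently closed integral $1$-forms); this cohomology vanishes because $\Delta$ is convex, hence contractible. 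Replacing your Moser gluing by this sheaf-theoretic patching is what turns the sketch into a proof.
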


The convexity theorem of Atiyah \cite{At1982} and Guillemin\--Sternberg \cite{GuSt1982} says that
the momentum map image is a convex polytope. In addition, if the dimension of the torus is precisely
half the dimension of the manifold, this polytope is Delzant (Definition~\ref{del2}).

Let $\mathcal{D}_\mathbb{T}$ denote the set of Delzant
polytopes.  It follows from Theorem \ref{Del1} that
\begin{equation}
\label{map}
\left[ (M,\omega,\T,\mu) \right] \ni \mathcal{M}_\mathbb{T}
\longmapsto \mu(M) \in \mathcal{D}_\mathbb{T}
\end{equation}
is injective. Delzant also showed how starting from a Delzant
polytope it is possible to reconstruct a symplectic toric manifold with momentum
image precisely equal to the Delzant polytope, thus implying that
\eqref{map} is a \emph{bijection}. To simplify our notation we often write 
$(M,\omega,\T,\mu)$ identifying the representative with
the corresponding equivalence class $\left[(M,\omega,\T,\mu) \right]$
in the moduli space $\mathcal{M}_\mathbb{T}$.

\subsection{Metric on $\Mo$}

Endow $\Dt$ with the distance function 
given by the volume of the symmetric difference 
$$
(\Delta_1\smallsetminus \Delta_2) \cup 
(\Delta_2\smallsetminus \Delta_1)
$$ 
of  any two polytopes $\Delta_1$ and $\Delta_2$. Using (\ref{map}) we define a metric $d_\T$ on 
$\Mo$ as the pullback of the metric defined on $\Dt$. In this way we
obtain the metric space $(\Mo,d_\T)$. This metric 
induces a topology $\nu$ on $\Mo$.

Consider  the $\sigma$-algebra $\mathfrak{B}(\R^n)$ of Borel sets 
of $\R^n$. Let $\lambda\colon\mathfrak{B}(\R^n)\to\R_{\geq 0} 
\cup \{\infty\}$ be the Lebesgue measure on $\R^n$, and let 
$\mathfrak{B}'(\R^n)\subset \mathfrak{B}(\R^n)$
consist of the Borel sets with finite Lebesgue measure.
Let  $\chi_C\colon \R^n\to \R$ be the characteristic function of $C\in \mathfrak{B}'(\R^n)$. 
Define  
\begin{eqnarray} \label{first}
 d(A,B):=\left\|\chi_A-\chi_B\right\|_{{\rm L}^1}.
\end{eqnarray}
This extends the distance function defined above on $\Dt$. However, it is not a metric on $\mathfrak{B}'(\R^n)$. 
By identifying the sets $A, B \in \mathfrak{B}'(\R^n)$ with
$d(A,B)=0$, we obtain a metric on the resulting quotient space of $\mathfrak{B}'(\R^n)$.  
Let $\mathcal{C}$ be the set of convex compact subsets of 
$\R^n$ with positive Lebesgue measure,
$\varnothing$ the empty set, and
$\hat{\mathcal{C}}:=\mathcal{C}\cup \{\varnothing\}.$ 
Then $\hat{\mathcal{C}}$ equipped with $d$ in (\ref{first}) is a metric space.

 \begin{remark}{\rm (Other moduli spaces)}
 \normalfont
 There is no
 ${\rm AGL}(n,\Z)$ equivalence relation involved in the definition of $\mathcal{D}_{\mathbb{T}}$.
 Such equivalence relation is often put on this space so that it is in one\--to\--one correspondence with the
 moduli space of symplectic toric manifolds \emph{up to equivariant isomorphisms}. This 
 is \emph{not} the relation which is relevant to this paper. Nonetheless, to avoid confusion let us recall 
  that 
 two symplectic toric manifolds $(M,\omega,\T,\mu)$ and $(M',\omega',\T,\mu')$ are \emph{equivariantly isomorphic} if there exists an automorphism of the torus $h\colon \T\to \T$ and an $h$-equivariant symplectomorphism $\varphi\colon M\to M'$, i.e., such that the following diagram commutes:
\begin{equation}\label{comm action1} \nonumber
\xymatrix{
\T\times M \;\; \ar[r]^{\;\;\rho*} \ar[d]_{(h,\varphi)} & M \ar[d]_{\varphi} \\
\T \times M' \;\;\ar[r]^{\;\;\rho'^*} &  M'.
 } \
\end{equation}
In \cite{PePiRaSa},  the space $\Mt$ denotes the \emph{moduli space of equivariantly isomorphic $2n$-dimensional
symplectic toric manifolds.}\footnote{ 
Two equivariantly isomorphic toric manifolds $(M,\omega,\T,\mu)$ and $(M',\omega',\T,\mu')$ 
are isomorphic if and only if $h$ in (\ref{comm action1}) is the identity and $\mu'=\mu \circ \varphi$.} 
Let $\agl(n,\Z)=\operatorname{GL}(n,\Z)\ltimes \R^n$ be the group 
of affine transformations of $\R^n$ given by $x\mapsto Ax+c,$ where
$A\in \operatorname{GL}(n,\Z)$ and $c\in \R^n$.
We say that two Delzant polytopes $\Delta_1$ and $\Delta_2$
are \emph{$\agl(n,\Z)$\--equivalent} if there exists $\alpha\in \agl(n,\Z)$
such that $\alpha(\Delta_1)=\Delta_2$.
Let $\Dtt$ be the moduli space of 
Delzant polytopes modulo the equivalence relation given by 
$\agl(n,\Z)$; we endow this space with the quotient topology induced by the projection map
$\pi\colon \Dt \to \Dtt\simeq \Dt/\agl(n,\Z)$. The map (\ref{map})
induces a bijection between $\Mt$ and $\Dtt$. Thus $\Mt$ is  a topological space with the topology $\widetilde{\nu}$ 
induced by this bijection. The topological space $(\Mt,\widetilde{\nu})$ is studied in \cite{PePiRaSa}. It would be interesting to give a version of
Theorem~\ref{symplectic-geometry} where $\Mo$ is replaced by $\Mt$. It was proven in \cite[Theorem~1]{PePiRaSa}
that $(\Mt,\widetilde{\nu})$  is connected.

 \end{remark}
 
 \subsection{Topological structure in dimension $4$}
 
 The following theorem, \emph{which holds in dimension $4$}, describes properties of the 
 topology of the aforementioned moduli space.
 
\begin{thm}[\cite{PePiRaSa}] \label{pp}
Let $\T$ be a $2$\--dimensional torus. The space 
$(\Mo,d_\T)$ is neither locally compact nor 
a complete metric space. Its completion can be identified with 
the metric space $(\hat{\mathcal{C}},d)$ in the following 
sense: identifying $(\Mo,d_\T)$ with $(\Dt,d)$ via 
{\rm (\ref{map})}, the completion of  $(\mathcal{D}_{\T},d)$ 
is $(\hat{\mathcal{C}},d)$.  
\end{thm}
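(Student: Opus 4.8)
The plan is to establish three separate assertions: that $(\Mo, d_\T)$ fails to be locally compact, that it fails to be complete, and that its completion is isometric to $(\hat{\mathcal{C}}, d)$. Throughout I work on the polytope side, using the isometry \eqref{map} to identify $(\Mo, d_\T)$ with $(\Dt, d)$, where $d$ is the $L^1$ distance between characteristic functions (equivalently, the volume of the symmetric difference). Since completeness, local compactness, and the identification of the completion are all metric notions, it suffices to prove everything for $(\Dt, d)$ sitting inside $(\hat{\mathcal{C}}, d)$.

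First I would show that $(\hat{\mathcal{C}}, d)$ is itself a complete metric space. A Cauchy sequence $\Delta_k$ of compact convex sets with positive measure has characteristic functions $\chi_{\Delta_k}$ forming a Cauchy sequence in $\mathrm{L}^1(\R^n)$, hence converging in $\mathrm{L}^1$ to some $f$; passing to a subsequence, $\chi_{\Delta_k} \to f$ a.e., so $f$ is a.e. equal to a characteristic function $\chi_A$. The key point is that the a.e. limit of characteristic functions of convex sets is again (a.e.) the characteristic function of a convex set: one checks that the limit set inherits convexity, and that it is either empty or has positive measure, landing in $\hat{\mathcal{C}}$. This gives completeness of the ambient space.

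Next I would identify the completion of $\Dt$ as precisely $\hat{\mathcal{C}}$, which amounts to showing that $\Dt$ is \emph{dense} in $(\hat{\mathcal{C}}, d)$. Given an arbitrary compact convex $C$ of positive measure, I would approximate it in the $L^1$/symmetric-difference metric by Delzant polytopes: first approximate $C$ by a convex polytope (standard, since $C$ is the intersection of its supporting half-spaces and finitely many cut off most of the volume), then perturb the polytope's facet normals to rational directions and its vertices to make it simple and smooth in the sense of Definition~\ref{del2}. A small rational perturbation changes the volume of the symmetric difference by an arbitrarily small amount, so every $C \in \mathcal{C}$ is a $d$-limit of Delzant polytopes; the empty set is approximated by Delzant polytopes of vanishingly small volume. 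Combined with completeness of $\hat{\mathcal{C}}$, density of $\Dt$ shows the completion of $(\Dt, d)$ is exactly $(\hat{\mathcal{C}}, d)$.

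Finally, for the non-local-compactness and non-completeness of $(\Mo, d_\T) \cong (\Dt, d)$ itself, I would exhibit an explicit Cauchy sequence of Delzant polytopes whose $d$-limit is a convex body that is \emph{not} a Delzant polytope (for instance a disk, or a polytope violating the smoothness/rationality condition), which shows $\Dt \subsetneq \hat{\mathcal{C}}$ and hence $\Dt$ is not complete. For the failure of local compactness I would argue that no closed $d$-ball around a Delzant polytope is compact, again by producing within any such ball a sequence with no convergent subsequence inside $\Dt$ — e.g. polytopes degenerating toward a non-Delzant limit. The main obstacle I anticipate is the density step: carefully verifying that one can simultaneously rationalize the facet normals \emph{and} enforce the smoothness condition (a $\Z$-basis of $\Z^n$ at each vertex) by an arbitrarily small perturbation, so the approximants genuinely lie in $\Dt$ rather than merely being rational simple polytopes. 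The convexity-preservation of $L^1$ limits is routine, and the non-completeness/non-compactness follow immediately once a single non-Delzant limit point is exhibited.
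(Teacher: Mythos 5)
The paper does not actually prove this statement: Theorem~\ref{pp} is quoted from \cite{PePiRaSa} and no argument for it appears in the present text, so there is no internal proof to compare yours against. Judged on its own, your outline (completeness of the ambient space $(\hat{\mathcal{C}},d)$, density of $\Dt$ in it, and explicit degenerating sequences for non-completeness and non-local-compactness) is the natural and, as far as the cited reference goes, the standard route; but two of your steps carry real content that the sketch leaves open. For completeness of $(\hat{\mathcal{C}},d)$ you need not only that the a.e.\ limit of the $\chi_{\Delta_k}$ is a.e.\ a characteristic function, but that the limit class contains a \emph{compact} convex representative: you must rule out mass escaping to infinity (which follows because a Cauchy symmetric-difference sequence with volumes bounded below must have uniformly overlapping, hence uniformly bounded, members) and then produce the convex representative, e.g.\ as the set of points of Lebesgue density one. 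For the density step — which you correctly flag as the main obstacle — note that the theorem is stated only for $n=2$, i.e.\ Delzant \emph{polygons}; there the simultaneous rationalization and smoothing is handled by the standard two-dimensional fan resolution (successive corner chops, each removable with arbitrarily small area), and this is precisely why the result is asserted only in dimension~$4$. Finally, for non-local-compactness your degenerating sequence must be arranged to stay inside an arbitrarily small $d$-ball around the given polygon (e.g.\ by rounding a single corner with finer and finer Delzant chops confined to an $\epsilon$-neighborhood of that vertex), not merely to exist somewhere in $\Dt$; with that adjustment the argument is sound.
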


\section{Density function} \label{sec:density}

Let $(M,\, \omega,\T,\mu)$ be a symplectic toric manifold of dimension $2n$.
Let $\Lambda$ be an automorphism of $\mathbb{T}$. Let $r>0$. 
We say that a subset $B$ of $M$ is a \emph{$\Lambda$\--equivariantly embedded symplectic ball 
of radius $r$} if there is a symplectic embedding $f:\mathbb{B}_{r} \rightarrow M$ with $f(\mathbb{B}_r)=B$ and such that the diagram:
\begin{eqnarray} 
\xymatrix{ \ar @{} [dr] |{\circlearrowleft}
\mathbb{T}^{n} \times \mathbb{B}_{r}  \ar[r]^{\Lambda \times f}      \ar[d]^{ \textup{Rot} }  &  \mathbb{T}^{n} \times M 
                 \ar[d]^{\psi}   \\
                   \mathbb{B}_{r}  \ar[r]^f   &       M    } \nonumber
\end{eqnarray} 
is commutative. Let $B$ be a $\Lambda$\--equivariantly embedded symplectic ball. We say that 
$B$ \emph{has center $f(0)$}. We say that another 
subset $B'$ of $M$ is an \emph{equivariantly embedded symplectic ball
of radius $r'$} if there exists an automorphism $\Lambda'$ of $\mathbb{T}$ such that
$B'$ is a $\Lambda'$\--equivariantly embedded symplectic ball of radius $r'$.

In what follows, the symplectic volume of a subset $X\subset M$ is given by 
$$
\vol_{\omega}(X):=\int_{X} \omega^{n}.
$$ 
Following \cite[Definition~1.6]{Pe2}, a \emph{toric ball packing of $M$} 
is given by a disjoint union of the form $$\mathcal{P}:=\bigsqcup_{\alpha \in A} B_{\alpha},$$ 
where each $B_{\alpha}$ is an  equivariantly embedded 
symplectic ball (note that we are not saying that all these balls
must have the same radii).  The 
\emph{density} $\Omega(\mathcal{P})$ of a toric ball packing $\mathcal{P}$ is given by the quotient
$$
\Omega(\mathcal{P}):= \frac{\vol_{\omega}(\mathcal{P})}{\vol_{\omega}(M)} \in [0,1]. 
$$
The \emph{density of a symplectic\--toric manifold} $(M,\, \omega,\T,\mu)$ is given by 
$$\Omega(M,\, \omega,\T,\mu):= \sup \Big\{\, \Omega(\mathcal{P})\, | \, \mathcal{P} \, \textup{is a toric ball packing of}\, M \Big\}.$$  
An \emph{optimal packing} (also called a \emph{maximal packing}) is a toric ball packing at which this density is achieved.
The \emph{optimal density function}  is defined as follows: it assigns to a manifold the density of one of its 
optimal packings (such an optimal packing always exists,
see \cite{PeSc2008}).
The optimal density function is interesting to us because \emph{it is a symplectic invariant}. 
In this paper we analyze the continuity of the density
function on the moduli space $\mathcal{M}_{\mathbb{T}}$  (see Theorem \ref{symplectic-geometry}), which was posed as an open problem in
 \cite[Problems 4 and 30]{PePiRaSa}.

\section{Convex geometry} \label{sec:convex}

Following \cite{Pe2, PeSc2008} we say that a subset $\Sigma$ of a Delzant polytope $\Delta$ is 
\emph{an admissible simplex of radius $r$ with center at a vertex $v$ of $\Delta$} if 
there is an element of $\AGL(n,\,\mathbb{Z})$ which takes:
\begin{itemize}
\item
 $\Delta(r^{1/2})$ to $\Sigma$,
 \item
 the origin to $v$,
 \item
 the edges of $\Delta(r^{1/2})$ meeting at the origin to  the edges of $\Delta$ meeting at $v$. 
\end{itemize}
We write
$$r_v:=\max\Big\{r>0\, |\, \exists \, \textup{an admissible simplex of radius $r$ with center $v$}\Big\}$$
for every vertex $v$.

In what follows we write  $\vol_{\textup{euc}}(A)$ for the Euclidean volume of a subset $A\subset \Delta$.

\begin{figure}[htb]
\begin{center}
\includegraphics{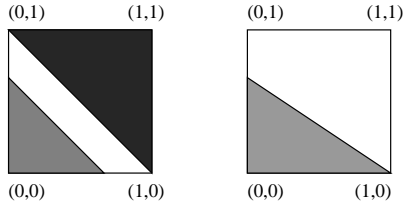}
\caption{The packing on the left is admissible, the one on the right is not.}\label{AF4}
\end{center}
\end{figure}

An \emph{admissible packing of a Delzant polytope $\Delta$} is given by a
a disjoint union 
$$\mathcal{P}:=\bigsqcup_{\alpha \in A} \Sigma_{\alpha},$$
where each $\Sigma_{\alpha}$ is 
an admissible simplex (note that we are not saying that they must have the same radii).  
The \emph{density} of an admissible packing $\mathcal{P}$ 
is  defined by the quotient 
$$\Omega(\mathcal{P}):= \frac{\vol_{\textup{euc}}(\mathcal{P})}{\vol_{\textup{euc}}(\Delta)} \in [0,\,1],$$ 
and the \emph{density of $\Delta$}
is defined by $$\Omega(\Delta):= \sup \Big\{\, \Omega(\mathcal{P})\, | \, \mathcal{P} \, \textup{is an admissible packing of}\, \Delta \Big\}.$$  
An \emph{optimal packing} (also called a \emph{maximal packing}) is an admissible ball packing at which this density is achieved. 
The \emph{optimal density function}  assigns to a Delzant polytope the density of one of its 
optimal packings (such packing always exists,
see \cite{PeSc2008}).
 
Figure~\ref{AF4} shows an example of an admissible packing and a non admissible one; at the manifold
level, the admissible one corresponds to a symplectic toric ball packing of $\mathbb{CP}^1 \times \mathbb{CP}^1$ by
two disjoint balls, while the non admissible one cannot be interpreted in this way because the shaded
triangle does not correspond to an equivariantly and symplectially embedded ball. Both packings in
Figure~\ref{AF5} are admissible.
 
\begin{figure}[htb]
\begin{center}
\includegraphics{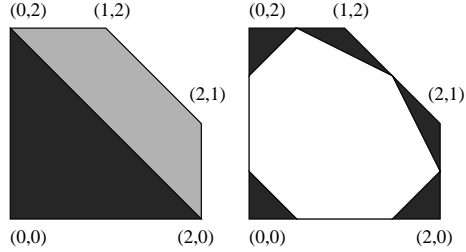}
\caption{Admissible packings of the polygon of a blow up of the product of
two symplectic spheres of radius $1/\sqrt{2}$.}\label{AF5}
\end{center}
\end{figure}

  Let $I\subset \mathbb{R}^n$ be an interval with rational slope.
  The \emph{rational\--length} $\length_{\mathbb{Q}}(I)$ of $I$ is the (unique) number $\ell$  such that 
  $I$ is $\AGL(n,\, \mathbb{Z})$\--congruent to a length $\ell$ interval on a coordinate axis.  
  As it is shown by the following result, an admissible simplex is parametrized by its center
 and its radius.

 \begin{lemma}[\cite{Pe2}] \label{adsi}
 Let $\Delta$ be a Delzant polytope and let $v\in \Delta$ be a vertex of $\Delta$.
  We denote the $n$ edges leaving $v$ by $e_v^1,\ldots, e_v^n$.  
  Then:
  \begin{itemize}
  \item[(i)]
  $r_v$ is given by
 $$r_v=\min\Big\{\length_{\mathbb{Q}}(e_{v}^1), \ldots, \length_{\mathbb{Q}}(e_{v}^n) \Big\}.
 $$  
 \item[(ii)]
 There exists an admissible simplex $\Sigma(v,\,r)$ centered at $v$ of radius $r$ if and only if we have that $0\leq r \leq r_v$.  Moreover $\Sigma(v,\,r)$ is the unique such admissible simplex.
 \item[(iii)]
 The volume of $\Sigma(v,\,r)$ in (ii) is given by
   $$\vol_{\textup{euc}}(\Sigma(v,\, r))= \frac{r^{n}}{n!}.
   $$
 \end{itemize}
 \end{lemma}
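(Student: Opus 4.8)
The plan is to reduce all three parts to a single normalization that exploits the Delzant (smoothness) hypothesis at the vertex. Since $\Delta$ is smooth at $v$, the $n$ primitive integer vectors $u_1,\ldots,u_n$ spanning the edges $e_v^1,\ldots,e_v^n$ form a $\Z$-basis of $\Z^n$. Hence the matrix $A$ with columns $u_1,\ldots,u_n$ lies in $\GL(n,\Z)$, and the affine map $\alpha(x)=Ax+v$ belongs to $\AGL(n,\Z)$, carries the origin to $v$, and sends the coordinate axes to the edge directions $u_i$. Replacing $\Delta$ by $\alpha^{-1}(\Delta)$, which is again Delzant, I may assume $v=0$ and $u_i=e_i$, so that near $v$ the polytope agrees with the positive orthant and each edge $e_v^i$ is the segment $\{t\,e_i : 0\le t\le \ell_i\}$ for some $\ell_i>0$. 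Before anything else I would record two facts that make this normalization effective: (a) every element of $\AGL(n,\Z)$ preserves both $\length_{\mathbb{Q}}$ (by definition of rational length) and $\vol_{\textup{euc}}$ (since $\GL(n,\Z)$ has determinant $\pm 1$ and translations are volume-preserving); and (b) in the normalized picture $e_v^i$ lies on a coordinate axis, so its rational length is just its ordinary Euclidean length $\ell_i$.

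With this in place, uniqueness in (ii) is essentially immediate. Any $g\in\AGL(n,\Z)$ witnessing admissibility must have the form $g(x)=Bx+v$ with $B\in\GL(n,\Z)$ sending the $e_i$-edges of $\Delta(r^{1/2})=\ConvHull(0,r\,e_1,\ldots,r\,e_n)$ to the $u_j$-edges of $\Delta$; since $B$ preserves primitivity and orientation this forces $Be_i=u_{\sigma(i)}$ for some permutation $\sigma$. The vertex $r\,e_i$ then maps to $v+r\,u_{\sigma(i)}$, so the vertex set of the resulting simplex is $\{v\}\cup\{v+r\,u_j : 1\le j\le n\}$ regardless of $\sigma$. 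Thus $\Sigma(v,r)=\ConvHull(v,\,v+r\,u_1,\ldots,v+r\,u_n)$ is unambiguously determined, which is exactly the uniqueness claim.

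Next I would prove (i) together with the existence half of (ii) by a convexity argument in normalized coordinates. Because $\Sigma(v,r)$ is the convex hull of $0$ and the points $r\,e_i$, and $\Delta$ is convex, the containment $\Sigma(v,r)\subseteq\Delta$ holds if and only if each vertex $r\,e_i$ lies in $\Delta$. Since $e_v^i$ is a one-dimensional face of the convex body $\Delta$, the ray $\{t\,e_i : t\ge 0\}$ meets $\Delta$ precisely in the edge segment, so $r\,e_i\in\Delta$ if and only if $r\le\ell_i$. Therefore an admissible simplex of radius $r$ exists exactly for $0\le r\le \min_i \ell_i$, whence $r_v=\min_i \ell_i=\min_i \length_{\mathbb{Q}}(e_v^i)$, using fact (b) to identify $\ell_i$ with the rational length. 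Part (iii) is then a one-line computation: translating $\Sigma(v,r)$ by $-v$ gives $\ConvHull(0,r\,u_1,\ldots,r\,u_n)$, whose Euclidean volume is $\frac{r^n}{n!}\,\lvert\det(u_1,\ldots,u_n)\rvert=\frac{r^n}{n!}$ since the $u_i$ form a $\Z$-basis; equivalently one invokes fact (a) to transport the volume $r^n/n!$ of the standard corner simplex $\Delta(r^{1/2})$.

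I expect the main obstacle to be making the normalization fully rigorous rather than any hard estimate. Concretely, one must invoke smoothness to guarantee the edge vectors form a genuine $\Z$-basis, so that $A\in\GL(n,\Z)$ and the reduction is performed by an honest element of $\AGL(n,\Z)$; and one must check that rational length is independent of the normalizing map and agrees with Euclidean length on a coordinate axis. Once these two points are nailed down, the edge-as-face observation, the convexity containment criterion, and the volume formula are all routine.
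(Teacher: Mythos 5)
This lemma is quoted in the paper from \cite{Pe2} and is not proved in the text, so there is no in-paper argument to compare against; judged on its own terms, your proof is correct and is essentially the standard one. The normalization by $\alpha(x)=Ax+v$ with $A\in\GL(n,\Z)$ built from the smoothness condition, the reduction of containment to the vertices $r\,e_i$ by convexity, the identification of the line through an edge with the edge itself (an edge is a face, so the line meets $\Delta$ only in the edge), and the determinant computation for (iii) are all sound, and the uniqueness argument via primitivity of $Be_i$ is the right one. Two small points to tighten: first, by \eqref{stsimplex} the model $\Delta(r^{1/2})$ is the corner simplex \emph{with the far facet removed}, so an admissible simplex is half-open; this is harmless (it has measure zero for (iii), and since $\Delta$ is closed the vertices $r\,e_i$ still lie in $\Delta$ whenever the half-open simplex does), but it should be acknowledged. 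Second, in the uniqueness step the phrase ``preserves orientation'' should be replaced by the correct reason: $B$ must send the ray $\R_{\geq 0}e_i$ into the edge $e_v^{\sigma(i)}$, so $Be_i=c_i u_{\sigma(i)}$ with $c_i>0$, and primitivity of $Be_i$ and of $u_{\sigma(i)}$ forces $c_i=1$.
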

  
  Now we can state the tool that we will use in the proof of the main theorem of the paper.
  Consider a symplectic\--toric manifold $(M, \, \omega,\, \T,\mu)$ and let $\Delta:=\mu(M)$.
   
 \begin{thm}[\cite{Pe2}] \label{2:t}  
 The following hold:
\begin{itemize}
\item[{\rm (i)}]
If  $B$ is an 
equivariantly embedded symplectic ball with center at $p \in M$ and 
radius $r$, the momentum map image  $\mu(B)$ is an admissible 
simplex in $\Delta$ centered at $\mu(p)$ of radius $r^{2}$.  

Moreover, if $\Sigma$ is an admissible simplex 
in $\Delta$ of radius $r$, then there is an equivariantly embedded symplectic ball $B$ in $M$ of radius $r^{1/2}$ 
and such that $$\mu(B)=\Sigma.$$
\item[{\rm (ii)}]
 Let $\mathcal{P}$ be a toric ball packing of $M$. Then $\mu(\mathcal{P})$ is an admissible packing of $\Delta$.
 Furthermore, we have the equality 
 $$\Omega(\mathcal{P})=\Omega(\mu(\mathcal{P})).
 $$  
 
 Moreover, for any admissible packing $\mathcal{Q}$ of $\Delta$ there is a toric ball packing $\mathcal{P}$ of $M$ such that $$\mu(\mathcal{P})=\mathcal{Q}.$$ 
 \end{itemize}
 \end{thm}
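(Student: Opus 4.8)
The plan is to transport the entire problem to the Delzant polytope by means of the local structure of the torus action near its fixed points, so that equivariant balls become admissible simplices and symplectic volumes become Euclidean volumes. I would first treat part~(i). For the forward implication, let $f\colon \mathbb{B}_r\to M$ be the $\Lambda$\--equivariant symplectic embedding with image $B$, and consider the composition $\mu\circ f\colon \mathbb{B}_r\to\mathbb{R}^n$. Commutativity of the defining diagram means that $f$ intertwines $\textup{Rot}$ on $\mathbb{B}_r$ with $\psi\circ(\Lambda\times\textup{id})$ on $M$; since $f^*\omega=\omega_0$, Hamilton's equation shows that $\mu\circ f$ is a momentum map for the action of $\T$ on $\mathbb{B}_r$ obtained by precomposing $\textup{Rot}$ with $\Lambda$. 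An automorphism $\Lambda$ of $\T\cong(S^1)^n$ is given by an element $A\in\GL(n,\Z)$, and its effect on momentum maps is the induced dual linear map; hence $\mu\circ f$ differs from $\mu^{\mathbb{B}_r}$ by a fixed element of $\AGL(n,\Z)$, whose translation part absorbs both the momentum map ambiguity and the location of the center. Applying this affine map to the image $\Delta^n(r)$ of $\mu^{\mathbb{B}_r}$ exhibits $\mu(B)$ as an admissible simplex; since $0$ is the unique $\textup{Rot}$\--fixed point of $\mathbb{B}_r$, its image $f(0)=p$ is a $\T$\--fixed point and $\mu(p)$ the corresponding vertex $v$, while comparing $\Delta^n(r)$ with the normalization in the definition of admissibility yields radius $r^2$.

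For the backward implication of part~(i), where the essential symplectic input enters, I would invoke the equivariant Darboux / local normal form theorem for Hamiltonian torus actions at the fixed point $p=\mu^{-1}(v)$: a $\T$\--invariant neighborhood of $p$ is equivariantly symplectomorphic to a neighborhood of the origin in $(\mathbb{C}^n,\omega_0)$ carrying a linear torus action whose weights are read off from the edges of $\Delta$ at $v$. The smoothness clause of the Delzant condition (Definition~\ref{del2}(iii)) guarantees that these weights form a $\Z$\--basis, so after composing with a suitable automorphism of $\T$ the model action becomes the standard $\textup{Rot}$. Restricting the resulting equivariant symplectomorphism to a ball and rescaling produces, for every admissible simplex $\Sigma$ of radius $r$, a $\Lambda$\--equivariant embedding of $\mathbb{B}_{r^{1/2}}$ with $\mu(B)=\Sigma$; that the embedding exists exactly in the range guaranteed by Lemma~\ref{adsi}(ii), i.e. $r\le r_v$, is precisely the statement that the simplex fits inside $\Delta$. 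I expect this normal\--form step to be the main obstacle, since it is the only point where one must produce an actual symplectic embedding rather than manipulate polytopes.

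Part~(ii) then follows by combining (i) with two observations. First, to see that $\mu(\mathcal{P})=\bigsqcup_\alpha \mu(B_\alpha)$ is an admissible packing I must check that the simplices $\Sigma_\alpha=\mu(B_\alpha)$ have disjoint interiors: for $x$ in the interior of $\Delta$ the fiber $\mu^{-1}(x)$ is a single $\T$\--orbit, and for $x\in\operatorname{int}(\Sigma_\alpha)$ the $\T$\--invariant ball $B_\alpha$ contains this entire orbit, so an overlap $x\in\operatorname{int}(\Sigma_\alpha)\cap\operatorname{int}(\Sigma_\beta)$ would force $B_\alpha\cap B_\beta\neq\varnothing$, contradicting disjointness. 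Second, the density equality reduces to the fact that for a symplectic toric manifold the Duistermaat\--Heckman measure is constant, i.e. $\mu_*(\omega^n/n!)$ is a fixed multiple of Lebesgue measure on $\Delta$; applying this to each $B_\alpha$ and to $M$ gives $\vol_{\omega}(B_\alpha)=c\,\vol_{\textup{euc}}(\Sigma_\alpha)$ and $\vol_{\omega}(M)=c\,\vol_{\textup{euc}}(\Delta)$ with the \emph{same} constant $c$, whence $\Omega(\mathcal{P})=\Omega(\mu(\mathcal{P}))$. Finally, the converse assertion of (ii) is obtained by applying the backward part of (i) to each simplex of a given admissible packing $\mathcal{Q}$ and noting that disjointness of the $\Sigma_\alpha$ propagates to disjointness of the resulting balls by the same single\--orbit\--fiber argument run in reverse.
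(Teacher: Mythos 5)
The paper does not actually prove Theorem~\ref{2:t}: it is imported verbatim from \cite{Pe2} and used as a black box, so there is no in-house argument to compare yours against. Judged on its own merits, your outline assembles the right ingredients — the transformation law of momentum maps under an automorphism $\Lambda\in\Aut(\T)\cong\GL(n,\Z)$, the fixed-point-to-vertex correspondence, the equivariant local normal form at a fixed point together with the smoothness clause of Definition~\ref{del2}, the single-orbit structure of fibers over interior points for disjointness, and the fact that $\mu_*(\omega^n)$ is a constant multiple of Lebesgue measure on $\Delta$ for the density identity. That last step is a clean way to get $\Omega(\mathcal{P})=\Omega(\mu(\mathcal{P}))$ without computing any constants, and the forward half of (i) and all of (ii) are essentially complete modulo routine checks.

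The genuine gap is the one you flag yourself, in the backward direction of (i). The equivariant Darboux/local normal form theorem only furnishes an equivariant symplectomorphism from \emph{some} neighborhood of $0\in\mathbb{C}^n$ onto a neighborhood of $p$, i.e., an embedding of $\mathbb{B}_\epsilon$ for an unspecified small $\epsilon$. The theorem asserts an embedded ball of radius $r^{1/2}$ for \emph{every} admissible simplex of radius $r\le r_v$; this is a quantitative, global statement, and ``restricting to a ball and rescaling'' cannot upgrade the germ to the full admissible range (rescaling is not a symplectomorphism, and there is no a priori lower bound on $\epsilon$ in terms of $r_v$). In \cite{Pe2} this is resolved by working in Delzant's explicit model of $M$ as a symplectic quotient of some $\mathbb{C}^d$, where the equivariant embedding of the full ball can be written down and its momentum image computed to be exactly $\Sigma$; some such global construction is unavoidable. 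A smaller omission in the forward direction: the definition of admissibility has a third clause requiring the affine map to carry the edges of $\Delta(r)$ at the origin to the edges of $\Delta$ at $v$. This does follow, since $\mu(B)\subset\Delta$ and a unimodular cone contained in another unimodular cone with the same apex must coincide with it, but the point needs to be stated, as it is exactly what rules out configurations like the right-hand packing in Figure~\ref{AF4}.
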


\section{The combinatorial convexity statement}
\label{sec:proof}

In this section we state and prove a theorem in convex geometry, that in view of Theorem \ref{2:t} implies Theorem \ref{symplectic-geometry}.

\begin{thm} \label{convex-geometry}
Let $N \geq 1$ be an integer and let $\mathcal{P}^N$ be the set of Delzant polygons of $N$ vertices, and let
$\mathcal{P}$ be the set of all Delzant polygons, so that $\mathcal{P}=\bigcup_{N\geq 1} \mathcal{P}^N.$ Then:
\begin{itemize}
\item[(1)]
$\Omega$ is discontinuous at every $\Delta \in \mathcal{P}$, and the restriction 
$\Omega|_{\mathcal{P}^N}$ is continuous for each $N \geq 1$.
\item[(2)]
Given $\Delta \in \mathcal{P}^N \subset \mathcal{P}$, define $\{\Omega_i(\Delta)\}_{1 \leq i \leq N}$ to be the
maximal density avoiding vertex $i$. Then $\mathcal{P}^N$ is the largest set containing  $\Delta$
where $\Omega$ is continuous if and only if $\Omega_i(\Delta)<\Omega(\Delta)$ for all
$1\leq i \leq N$.
\end{itemize}
\end{thm}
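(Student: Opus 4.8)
The plan is to deduce the statement from a finite-dimensional optimization over radii. By Lemma~\ref{adsi} an admissible simplex is determined by its centre (a vertex of $\Delta$) together with its radius, there is at most one such simplex at each vertex, and in dimension $n=2$ its Euclidean volume is $r^2/2$. Hence, for a Delzant polygon $\Delta$ with vertices $v_1,\dots,v_N$, an admissible packing is the same datum as a vector $(r_1,\dots,r_N)$ with $0\le r_i\le r_{v_i}$ such that the simplices $\Sigma(v_i,r_i)$ are pairwise disjoint, and
$$
\Omega(\Delta)=\max\ \frac{1}{2\,\vol_{\textup{euc}}(\Delta)}\sum_{i=1}^N r_i^2,
$$
the maximum being over this feasible set. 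The disjointness conditions are closed and vary continuously with $\Delta$; the binding ones are, for each edge joining $v_i$ and $v_j$ of rational length $\ell_{ij}$, the inequality $r_i+r_j\le \ell_{ij}$, and $r_{v_i}=\min_j\ell_{ij}$ over the two edges at $v_i$ (any further constraints between non-adjacent vertices only shrink the feasible set and are likewise continuous).

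For the continuity of $\Omega|_{\mathcal{P}^N}$ I would first use that, among convex bodies with nonempty interior, convergence in $d_\T$ (the symmetric-difference/$\textup{L}^1$ distance) implies Hausdorff convergence, so a polygon $\Delta'\in\mathcal{P}^N$ sufficiently close to $\Delta$ has each vertex and edge close to those of $\Delta$. Since Delzant edge directions are primitive integer vectors, hence isolated, $\Delta'$ carries exactly the same edge directions as $\Delta$ and is obtained from it by translating the supporting lines. Thus a neighbourhood of $\Delta$ in $\mathcal{P}^N$ is a continuous finite-dimensional family on which the rational lengths $\ell_{ij}$, the bounds $r_{v_i}$, and $\vol_{\textup{euc}}(\Delta')$ all vary continuously; the feasible set then varies continuously in the Hausdorff sense and the objective is jointly continuous, so by the maximum theorem $\Omega$ is continuous on $\mathcal{P}^N$.

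To prove discontinuity at an arbitrary $\Delta\in\mathcal{P}^N$ I would exhibit an approximating sequence whose densities do not converge to $\Omega(\Delta)$. Blow up (truncate) \emph{every} vertex of $\Delta$ by a small amount $s$, obtaining a Delzant polygon $\Delta_s$ with $2N$ vertices whose boundary alternates between slightly shortened original edges and short truncation edges of rational length $O(s)$. Each vertex of $\Delta_s$ is incident to exactly one truncation edge, so $r_v=O(s)$ for \emph{every} vertex, whence $\Omega(\Delta_s)\le \tfrac{1}{2\vol_{\textup{euc}}(\Delta_s)}\,(2N)\,O(s^2)\to 0$. As $d_\T(\Delta_s,\Delta)=O(s^2)\to 0$ while $\Omega(\Delta)>0$ (a single admissible simplex at any one vertex already has positive volume), $\Omega$ is discontinuous at $\Delta$; since $\Delta$ is arbitrary, together with the previous paragraph this settles part~(1).

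For part~(2) I would set, for $T\subseteq\{1,\dots,N\}$, the density $\Omega_T(\Delta)$ computed over packings using no simplex centred at a vertex in $T$, so $\Omega_\emptyset=\Omega$ and $\Omega_{\{i\}}=\Omega_i$. Truncating precisely the corners in $T$ gives a sequence $\to\Delta$ along which the truncated vertices contribute $o(1)$ while the surviving ones keep their radius bounds in the limit, so $\Omega\to\Omega_T(\Delta)$. The crux is the converse stability statement: for an \emph{arbitrary} sequence $\Delta_k\to\Delta$ in $\mathcal{P}$, every subsequential limit of $\Omega(\Delta_k)$ equals some $\Omega_T(\Delta)$, where $T$ records which vertices of $\Delta$ are destroyed in the limit. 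I expect this to be the main obstacle: a small $\textup{L}^1$-perturbation can replace a long edge by one whose primitive integer direction is a large vector, collapsing its rational length and hence shrinking $r_v$ at its endpoints even though the perturbation is Euclidean-small, so one must show that near each vertex $\Delta_k$ either carries a vertex whose radius bound tends to $r_{v_i}$ or a configuration forcing that bound to drop, and that surviving edges retain their rational lengths. Granting this, the limit values of $\Omega$ along sequences through $\Delta$ are exactly $\{\Omega_T(\Delta):T\subseteq\{1,\dots,N\}\}$, and the continuity locus through $\Delta$ is the union of the strata on which $\Omega_T=\Omega$. Since $\Omega_T\le\Omega_i$ whenever $i\in T$, no nonempty $T$ has $\Omega_T=\Omega$ precisely when $\Omega_i<\Omega$ for all $i$; in that case the only surviving stratum is $T=\emptyset$, namely $\mathcal{P}^N$, which yields the stated equivalence.
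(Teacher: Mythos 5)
Your overall strategy tracks the paper's: truncating every corner to force discontinuity, rigidity of the local structure at vertices to get continuity on $\mathcal{P}^N$, and chopping the corner at a vertex $i$ with $\Omega_i(\Delta)=\Omega(\Delta)$ to enlarge the continuity locus. But the proposal has a genuine gap exactly where you flag it: the converse half of (2). You need that for an arbitrary sequence $\Delta_k\to\Delta$ with more than $N$ vertices, the densities $\Omega(\Delta_k)$ can only accumulate at values of the form $\Omega_T(\Delta)\le\max_i\Omega_i(\Delta)$ with $T\neq\varnothing$, and you write ``Granting this\dots'' without an argument. This is the heart of the theorem: without it you have not excluded sequences in $\bigcup_{j>N}\mathcal{P}^j$ along which $\Omega(\Delta_k)\to\Omega(\Delta)$ even though $\Omega_i(\Delta)<\Omega(\Delta)$ for every $i$. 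The paper closes this by the structural observation that every Delzant polygon $d$-close to $\Delta$ is obtained from $\Delta$ by translating its sides in a parallel way together with chopping small corners, so that extra vertices can only arise from truncations, and truncating near the $i$th vertex caps the achievable density by roughly $\Omega_i(\Delta)+o(1)$. That observation (or your ``stability'' statement) is precisely what must be proved; it does not follow from $\mathrm{L}^1$-closeness alone, as your own remark about long primitive directions with tiny rational length shows.

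A secondary problem: in the continuity argument on $\mathcal{P}^N$ you assert that primitive integer directions are ``isolated,'' which is false (the directions of $(k,1)$ accumulate at that of $(1,0)$). The conclusion you want --- that a nearby $N$-gon has the same edge directions as $\Delta$ --- is still true, but it needs the quantitative input the paper uses: at a vertex whose angle $\alpha$ stays bounded away from $0$ and $\pi$, the Delzant condition forces $\ell_1\ell_2\sin\alpha=1$ for the primitive edge vectors, so their lengths are bounded by a constant $C_\alpha$ and they range over a finite set; combined with convergence of the directions this makes them eventually constant. With that repair, your reduction to a finite-dimensional optimization over radii and the maximum-theorem argument is a clean, somewhat more explicit alternative to the paper's appeal to \cite{PeSc2008} for continuity under parallel translation of the sides; but as written, both the key step of (2) and this lemma are asserted rather than proved.
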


\begin{proof}
In the proof of this result we will constantly make use of the following simple observation (see also the discussion below): 
given $\Delta \in \mathcal{P}$, a neighborhood of $\Delta$ is made up of polygons where either we translate
the sizes in a parallel way, or we chop
some small corners of $\Delta$. In particular the number of vertices can only increase.\\

We prove (1) first. To show that
$\Omega$ is discontinuous at any $\Delta \in \mathcal{P}^N$, we fix $\epsilon>0$ small and we chop all
the corners, adding around each of them a small side of length $\leq \epsilon$, making sure that the new polygon is still a Delzant polygon.

More precisely, if two consecutive sides are given by $(0,a)$ and $(b,0)$ for some $a,b>0$, because the polygon is a Delzant polygon it follows that $a/b \in \mathbb Q$. Hence, if we chop the corner at the origin with the segment connecting $(a/M,0)$ to $(0,b/M)$
where $M \in \mathbb N$ is very large, we obtain a new Delzant polygon.
For the general case, given a couple of consecutive sides,
there is a map in ${\rm AGL}(2,\mathbb Z)$ sending
$(0,0)$ to the common vertex, and 
$(a,0)$ and $(0,b)$ to the other two vertices. Hence, it suffices to chop from the polygon the image
of the triangle determined by $(a/M,0)$, $(0,b/M)$, $(0,0)$.

This construction gives us a polygon with $2N$ vertices with the following property: when considering the definition of density,
any simplex will have at least one side of length at most $\epsilon$ while the others are universally bounded,
so the total volume will be at most $CN\epsilon$. Since $\epsilon$ can be arbitrarily small, this proves that $\Omega$
cannot be continuous.

On the other hand, we show that $\Omega$ is continuous when restricted to $\mathcal{P}^N$.
To see this, we call an angle $\alpha$ smooth if it can be obtained as the angle of  a smooth triangle
having the origin as one of its vertices,
$\alpha$ being the angle at the origin.
Since the other two vertices belong to $\mathbb Z^2$ (call them $v_1,v_2$), it follows that 
the sides $(0,v_1)$ and $(0,v_2)$ have lengths $\ell_1,\ell_2 \geq 1$.
Also, since such triangle is the image of $\Delta^2(1)$ under a map in $\AGL(2,\mathbb Z)$
its area must be $1/2$, that is
$$\frac{1}{2}=\frac{\ell_1\ell_2 \sin \alpha}{2}.$$
These facts imply that there exists a constant $C_\alpha$, depending only on $\sin\alpha$, such that
 $$\ell_1,\ell_2 \leq C_\alpha.$$
 
Now, fix $\Delta \in \mathcal{P}_N$, and let $\alpha_1,\ldots, \alpha_N$ be the angles of $\Delta$. 
Then, if $\Delta' =(\alpha'_1,\ldots,\alpha'_N)\in \mathcal{P}^N$, 
since the set of smooth angles is discrete it follows that $\alpha'_i=\alpha_i$
whenever $d(\Delta',\Delta) \ll1$.
Hence, for every $\Delta'$ close to $\Delta$ we are just translating the sides of the polygon in a parallel way, and since $\Omega$ is
continuous along this family of transformations \cite{PeSc2008} this proves that $\Omega$ is continuous on the whole $\mathcal{P}^N$, proving (1).\\

Next we show (2). 
Assume first that $$\Omega(\Delta)=\Omega_i(\Delta)\,\,\, \textup{for some}\,\,\, i \in \{1,\ldots,N\}.$$ 
This implies that we can find an optimal family
in the definition of $\Omega$ with no simplex centered at $i$ (simply take an optimal family in the definition of $\Omega_i(\Delta)$). 
Then, by slightly reducing the radius of each simplex
we can keep the density arbitrarily close to $\Omega(\Delta)$ making sure that no simplex touches the vertex $i$.
This gives us the possibility to chop a small corner around the vertex $i$, obtaining  Delzant polytope $\Delta' \in \mathcal P^{N+1}$ keeping the density still arbitrarily close  $\Omega(\Delta)$.
This procedure shows that we can find a family $\{\Delta_k\}_{k \geq 1} \subset \mathcal P^{N+1}$ with 
\begin{equation}
\label{eq:delta k continuous}
d(\Delta_k,\Delta)\to 0\,\,\, \textup{and} \,\,\, \Omega(\Delta_k)\to \Omega(\Delta),
\end{equation} proving that there is a set larger than
$\mathcal{P}^N$ where $\Omega$ is continuous at $\Delta$.
Viceversa,  assume there exists $\eta>0$ such that $$\Omega_i(\Delta) \leq \Omega(\Delta)-\eta\,\,\, \textup{for all} \,\,\,i=1,\ldots,N.$$
Since it is impossible to approximate $\Delta$ with polygons $\Delta'$ that have strictly less than $N$ vertices
and we already proved that $\Omega$ is continuous on $\mathcal P^N$, 
we have to exclude that we can find some sequence $\{\Delta_k\}_{k\in \mathbb N}\subset \cup_{j>N}\mathcal P^j$
such that \eqref{eq:delta k continuous} hold.

To see this notice that 
if we chop any corner around one of the vertices (say around $i$) adding a small side,
then necessarily the density will be close to $\Omega_i(\Delta)$, hence less than $$\Omega(\Delta)-\eta/2,$$
which proves that  \eqref{eq:delta k continuous} cannot hold. This concludes the
proof of (2).
\end{proof}

\bibliographystyle{new}

\begin{thebibliography}{300}
  
  
  

\bibitem{anacannas} A.\ Cannas da Silva: \emph{Lectures on Symplectic Geometry}  (book), Lecture Notes in Mathematics 1764, Corrected 2nd Printing, Springer, 2008.


 
\bibitem{At1982} 
M. Atiyah: Convexity and commuting Hamiltonians, \emph{Bull. 
London Math. Soc.} {\bf 14}, 1--15 (1982). 


  \bibitem{AuCaLe}
  M. Audin, A. Cannas da Silva, E. Lerman:
  \emph{Symplectic geometry of integrable systems},
   Advanced Courses in Mathematics. CRM Barcelona. Birkh\"auser Verlag, Basel (2003).

\bibitem{B0} P. Biran: A stability property of symplectic packing,  {\it Invent. Math.} {\bf 136} (1999), no. 1, 123--155.

\bibitem{B1/2} P. Biran:  Symplectic packing in dimension $4$.{\it Geom. Funct. Anal.} {\bf 7} (1997), no. 3, 420--437.


\bibitem{B3}
P. Biran: Geometry of symplectic intersections,
{\it Proc. ICM Beijing } {\bf III} (2004) 1-3.



\bibitem{BoLiWu2013}
    M.S. Borman,
    T.-J. Li,
    W. Wu: Spherical Lagrangians via ball packings and symplectic cutting,
    \emph{Selecta Mathematica} {\bf 1}(2013) 261\--283. 
 

\bibitem{De1988} 
T. Delzant: Hamiltoniens p{\'e}riodiques et images convexes 
de l'application moment, {\em Bull. Soc. Math. France} 
{\bf 116}, 315--339 (1988).


\bibitem{Fu1993} 
W. Fulton:  
\emph{Introduction to Toric Varieties},  Annals of 
Mathematical Studies, 131. The William H. Roever Lectures in 
Geometry. Princeton University Press, Princeton, NJ (1993).


\bibitem{GuSt1982} 
V. Guillemin and S. Sternberg: Convexity
properties of the moment mapping, \textit{Invent. Math.} 
{\bf 67}, 491--513 (1982).
    

\bibitem{Gu1994} V. Guillemin: \emph{Moment Maps and Combinatorial
      Invariants of Hamiltonian $T^n$-spaces}.  Progress in
    Mathematics, 122. Birkh{\"a}user Boston, Inc., Boston, MA, 1994.


\bibitem{MP}
D. McDuff, L. Polterovich: Symplectic packings and algebraic geometry. With an appendix by Y. Karshon. 
\emph{Invent. Math.} {\bf 115} (1994), no. 3, 405--434. 


\bibitem{Pe2}
\'A. Pelayo: Toric symplectic ball packing,  \emph{Topology 
Appl.} {\bf 153} (2006) 3633--3644.



\bibitem{Pe1} 
A. Pelayo: Topology of spaces of equivariant symplectic embeddings, 
{\it Proc. Amer. Math. Soc.} {\bf 135} (2007) 277--288.


\bibitem{PePiRaSa} \'A. Pelayo, A. Pires, T. Ratiu, and S. Sabatini: Moduli
spaces of toric manifolds,  \emph{Geom. Dedicata} {\bf 169} (2014) 323\--341. 

\bibitem{PeSc2008}
\'A. Pelayo and B. Schmidt: Maximal ball packings of 
symplectic-toric manifolds, 
\emph{Int. Math. Res. Not.}, ID rnm139, 24 pp (2008).

     

\bibitem{PeVN2012} 
\'A. Pelayo and  S. V\~u Ng\d oc: 
First steps in symplectic and spectral theory of integrable 
systems,
\emph{Discrete and Cont. Dyn. Syst., Series A} {\bf 32}, 
3325--3377 (2012).
  
  
\bibitem{Schlenk2005} F. Schlenk: Embedding Problems in Symplectic
  Geometry. de Gruyter Expositions in Mathematics, vol. 40. Walter de
  Gruyter, Berlin (2005). 
   
\bibitem{T} 
L. Traynor:  Symplectic packing constructions. {\it J. Diff. Geom.} {\bf 41} (1995), no. 3, 735--751.  

\bibitem{Xu}  
G. Xu:  Curves in $P\sp 2$ and symplectic packings. {\it Math. Ann.} {\bf 299} (1994), no. 4, 609--613.               
\end{thebibliography}
\addcontentsline{toc}{section}{References}

\newpage

\noindent
\\
{\bf Alessio Figalli}\\
The University of Texas at Austin\\
Mathematics Dept. RLM 8.100\\
2515 Speedway Stop C1200\\
Austin, Texas 78712-1202, USA\\
{\em E\--mail}: {figalli@math.utexas.edu}\\

\noindent
 {\bf {\'A}lvaro Pelayo} \\
  Department of Mathematics\\
University of California, San Diego\\
9500 Gilman Drive  \# 0112\\
La Jolla, CA 92093-0112, USA\\
{\em E\--mail}: {alpelayo@math.ucsd.edu}\\

\end{document}